
\documentclass[12pt]{amsart}
\usepackage{verbatim,amsmath,amssymb,amsthm}



\newtheorem{lemma}[equation]{Lemma}
\newtheorem{theorem}[equation]{Theorem}
\newtheorem{corollary}[equation]{Corollary}
\newtheorem{proposition}[equation]{Proposition}

\theoremstyle{remark}
\newtheorem*{remark}{Remark}
\newtheorem*{example}{Example}


\renewcommand{\hat}{\widehat}

\DeclareMathOperator\PSL{PSL}
\DeclareMathOperator\im{im}
\newcommand\F{\mathbb{F}}


\begin{document}

\title[Classes of permutation polynomials based on cyclotomy]{Classes of permutation
 polynomials based on cyclotomy and an additive analogue}

\author{Michael E. Zieve}
\address{
Michael E. Zieve\\
Rutgers University\\
Department of Mathematics\\
110 Frelinghuysen Road\\
Piscataway, NJ 08854--8019\\
USA}

\email{zieve@math.rutgers.edu}

\urladdr{www.math.rutgers.edu/$\sim$zieve}

\date{\today}


\thanks{I thank Jos\'e Marcos for sending me preliminary versions of
his paper \cite{M}, and for encouraging me to develop consequences
of his ideas while his paper was still under review.}

\begin{abstract}
I present a construction of permutation polynomials based on cyclotomy,
an additive analogue of this construction, and a generalization of this
additive analogue which appears to have no multiplicative analogue.
These constructions generalize recent results of Jos\'e Marcos.
\end{abstract}

\maketitle

\centerline{\textit{\normalsize Dedicated to Mel Nathanson
    on the occasion of his sixtieth birthday}}
\vspace{\baselineskip}

\section{Introduction}

Writing $\F_q$ for the field with $q$ elements, we consider
\emph{permutation polynomials} over $\F_q$, namely polynomials
$f\in\F_q[x]$ for which the map $\alpha\mapsto f(\alpha)$ induces a
permutation of $\F_q$.  These polynomials first arose in work of
Betti~\cite{Betti}, Mathieu~\cite{Mathieu}, and Hermite~\cite{Hermite},
as a tool for representing and studying permutations.

Since every mapping $\F_q\to\F_q$ is induced by a polynomial, the study
of permutation polynomials focuses on polynomials with unusual properties
beyond inducing a permutation.  In particular, permutation polynomials of
`nice' shapes have been a topic of interest since the work of Hermite,
in which he noted that there are many permutation polynomials of the form
\[
f(x) := a x^i (x^{\frac{q-1}2}+1) - b x^j (x^{\frac{q-1}2}-1)
\]
with $q$ odd, $i,j>0$, and $a,b\in\F_q^*$.  The reason for this is that
$f(\alpha)=2a\alpha^i$ if $\alpha\in\F_q$ is a square, and
$f(\alpha)=2b\alpha^j$ otherwise; thus, for instance, $f$ is a permutation
polynomial if $2a$ and $2b$ are squares and $\gcd(ij,q-1)=1$.

More generally, any polynomial of the form $f(x):=x^r h(x^{(q-1)/d})$ induces
a mapping on $\F_q$ modulo $d$-th powers, so testing whether $f$ permutes
$\F_q$ reduces to testing whether the induced mapping on cosets is bijective
(assuming that $f$ is injective on each coset, or equivalently that $\gcd(r,(q-1)/d)=1$).
The vast majority of known examples of `nice' permutation polynomials have
this `cyclotomic' form for some $d<q-1$; see for instance
[1--5, 7, 9--25, 29--34, 36--43].
Moreover, there is a much longer list of papers proving nonexistence of permutation
polynomials of certain shapes, and nearly all such papers again address these
polynomials $f(x)$ having cyclotomic behavior.

In the recent preprint \cite{M}, Marcos gives five constructions of permutation
polynomials.  His first two constructions are new classes of permutation polynomials
having the above cyclotomic form.  His third construction is a kind of
additive analogue of the first, resulting in polynomials of the form
$L(x)+h(T(x))$ where $T(x):=x^{q/p}+x^{q/p^2}+\dots+x$ is the trace polynomial
from $\F_q$ to its prime field $\F_p$, and $L(x)=\sum a_i x^{p^i}$ is any additive polynomial.
The idea of the analogy is that $T(x)$ induces a homomorphism $\F_q\to\F_p$, just as
$x^{(q-1)/d}$ induces a homomorphism from $\F_q^*$ to its subgroup of $d$-th roots
of unity.  The fourth construction in \cite{M} is a variant of the third for
polynomials of the form $L(x)+h(T(x))(L(x)+c)$,
and the fifth construction replaces $T(x)$ with other symmetric functions in
$x^{q/p}$, $x^{q/p^2}$, \dots, $x$.

In this paper I present rather more general versions of the first four constructions
from \cite{M}, together with simplified proofs.  I can say nothing new about the
fifth construction from \cite{M}, although that construction is quite interesting
and I encourage the interested reader to look into it.


\section{Permutation polynomials from cyclotomy}

In this section we prove the following result, where for $d\ge 1$ we write
$h_d(x):=x^{d-1}+x^{d-2}+\dots+x+1$.

\begin{theorem} \label{one}
Fix a divisor $d>2$ of $q-1$, integers $u\ge 1$ and $k\ge 0$, an element
$b\in\F_q$, and a polynomial $g\in\F_q[x]$ divisible by $h_d$.  Then
\[
f(x) := x^u \left(b x^{k(q-1)/d} + g(x^{(q-1)/d})\right)
\]
permutes\/ $\F_q$ if and only if the following four conditions hold:
\begin{enumerate}
\item $\gcd(u,(q-1)/d)=1$,
\item $\gcd(d, u+k(q-1)/d)=1$,
\item $b\ne 0$,
\item $1+g(1)/b$ is a $d$-th power in $\F_q^*$.
\end{enumerate}
\end{theorem}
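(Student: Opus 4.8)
The plan is to reduce the problem to a question about permutations of the cyclic group $\mu_d$ of $d$-th roots of unity in $\F_q^*$, and then to settle that question by a short case analysis in which the hypothesis $d>2$ is essential.

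Set $s:=(q-1)/d$ and write $f(x)=x^u H(x^s)$ with $H(y):=by^k+g(y)$. I would start from the standard cyclotomic reduction, reproving it from scratch in a few lines if it is not available to quote: since $u\ge1$ we have $f(0)=0$; the map $x\mapsto x^s$ sends $\F_q^*$ onto $\mu_d$ with fibers the $d$ cosets of the subgroup $C$ of $d$-th powers, each of size $s$; on the fiber over $\zeta\in\mu_d$ the map $f$ is $x\mapsto H(\zeta)x^u$, which is injective exactly when $\gcd(u,s)=1$ and then carries that fiber onto the fiber over $\zeta^u H(\zeta)^s$. Reading this off fiber by fiber, $f$ permutes $\F_q$ if and only if $\gcd(u,s)=1$ and $\zeta\mapsto\zeta^u H(\zeta)^s$ is a well-defined permutation $\phi$ of $\mu_d$, where ``well-defined'' means precisely that $H(\zeta)\ne0$ for all $\zeta\in\mu_d$, since otherwise a whole fiber collapses onto $0=f(0)$. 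So $\gcd(u,s)=1$ is condition~(1), and it remains to show $\phi$ is a well-defined permutation of $\mu_d$ if and only if (2), (3), (4) hold.

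Next I would compute $\phi$ using $h_d\mid g$. Since $(x-1)h_d(x)=x^d-1$, we get $g(\zeta)=0$ for all $\zeta\in\mu_d\setminus\{1\}$, so there $H(\zeta)=b\zeta^k$ and $\phi(\zeta)=b^s\zeta^{u+ks}$, while $H(1)=b+g(1)$ gives $\phi(1)=(b+g(1))^s$. Well-definedness therefore amounts to $b\ne0$ (to make $H(\zeta)=b\zeta^k\ne0$ for $\zeta\ne1$) together with $b+g(1)\ne0$; the former is condition~(3), and the latter is automatic once (3) and (4) hold, since then $c:=1+g(1)/b$ is nonzero. Assuming $b\ne0$, the map $\phi$ agrees on $\mu_d\setminus\{1\}$ with the monomial map $\psi(\zeta):=b^s\zeta^{u+ks}$, and $\phi(1)=(b+g(1))^s=b^s c^s$, so $\phi$ and $\psi$ coincide exactly when $c^s=1$, that is, exactly when $c$ is a $d$-th power in $\F_q^*$ (condition~(4)). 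Finally, since $b^s\in\mu_d$ and multiplication by it is a bijection of $\mu_d$, the map $\psi$ permutes $\mu_d$ if and only if the exponent $u+ks=u+k(q-1)/d$ is coprime to $d$, i.e.\ condition~(2).

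It remains to show that, with $b\ne0$, $\phi$ permutes $\mu_d$ if and only if (2) and (4) hold, and this is the only place where $d>2$ is used. If $c$ is a $d$-th power then $\phi=\psi$ and we are done by the previous paragraph. If $c$ is not a $d$-th power, so $\phi(1)\ne\psi(1)$ while $\phi$ and $\psi$ agree elsewhere, I would show $\phi$ is never injective, regardless of (2): if $\psi$ is already non-injective, pick $\eta\in\mu_d\setminus\{1\}$ with $\eta^{u+ks}=1$ and then $\zeta\in\mu_d\setminus\{1,\eta^{-1}\}$, which is possible since $|\mu_d|=d\ge3$, so that $\zeta$ and $\zeta\eta$ are distinct elements of $\mu_d\setminus\{1\}$ with $\phi(\zeta\eta)=\psi(\zeta\eta)=\psi(\zeta)=\phi(\zeta)$; and if $\psi$ is injective, then $\phi(\mu_d\setminus\{1\})=\psi(\mu_d\setminus\{1\})=\mu_d\setminus\{b^s\}$ already contains $\phi(1)=b^s c^s\ne b^s$, again forcing a collision. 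Hence (4) is necessary, and when it holds $\phi=\psi$, so its being a permutation is equivalent to (2); assembling these equivalences proves the theorem. I expect this last case analysis to be the main obstacle, since it is exactly where one must track how the single exceptional value $\phi(1)$ interacts with the otherwise monomial behaviour of $\phi$; it is also where $b=0$ or $d\le2$ would break the argument, consistent with their being excluded from the hypotheses.
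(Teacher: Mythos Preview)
Your proof is correct and takes essentially the same approach as the paper: reduce via the cyclotomic lemma (which you reprove) to studying $\phi=\hat f$ on $\mu_d$, use $h_d\mid g$ to see that $\phi$ is the monomial map $\psi(\zeta)=b^s\zeta^{u+ks}$ on $\mu_d\setminus\{1\}$, and then analyze the single exceptional value $\phi(1)$. The paper's organization is marginally sleeker---it first shows that injectivity of $\hat f$ on $\mu_d\setminus\{1\}$ is equivalent to (2) and (3), and then, knowing the image is $\mu_d\setminus\{b^s\}$, reads off (4) as the condition $\hat f(1)=b^s$, thereby bypassing your two-subcase split when (4) fails---but the mathematical content is the same.
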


The proof uses the following simple lemma.

\begin{lemma} \label{lem}
Fix a divisor $d$ of $q-1$, an integer $u>0$, and a polynomial $h\in\F_q[x]$.
Then $f(x):=x^u h(x^{(q-1)/d})$ permutes\/ $\F_q$ if and only if the following
two conditions hold:
\begin{enumerate}
\item $\gcd(u,(q-1)/d)=1$,
\item $\hat f(x):=x^u h(x)^{(q-1)/d}$ permutes the set $\mu_d$ of $d$-th roots of unity~in~$\F_q^*$.
\end{enumerate}
\end{lemma}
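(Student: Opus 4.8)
The plan is to prove Lemma~\ref{lem} by the standard technique for studying polynomials of the form $x^u h(x^{(q-1)/d})$, namely analyzing the map on $\F_q^*$ cyclotomically and then handling $0$ separately. First I would dispose of $\alpha = 0$: since $u > 0$, we have $f(0) = 0$, and conversely I must check that $f(\alpha) \ne 0$ for all $\alpha \in \F_q^*$ under either set of hypotheses — more precisely, $f$ permutes $\F_q$ iff $f$ induces a bijection of $\F_q^*$ onto itself, so I should argue that if $f$ permutes $\F_q$ then $0$ has a unique preimage, namely $0$, and that $f$ restricted to $\F_q^*$ has image in $\F_q^*$. Then the whole question reduces to: when does $\alpha \mapsto \alpha^u h(\alpha^{(q-1)/d})$ permute $\F_q^*$?

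Next I would set up the cyclotomic reduction. Write $s = (q-1)/d$, so $\F_q^*$ is the disjoint union of the $d$ cosets of the subgroup $C$ of $s$-th powers (equivalently, $\alpha^{(q-1)/d} = \alpha^s$ ranges over $\mu_d$ as $\alpha$ ranges over $\F_q^*$, taking each value in $\mu_d$ exactly $s$ times). On each such coset, $h(\alpha^{s})$ is constant, so $f$ acts as $\alpha \mapsto c_i \alpha^u$ for the appropriate constant $c_i \in \F_q$. The point is a well-known lemma (essentially due to Wan–Lidl / Park–Lee / Akbary–Wang, and surely usable here as ``folklore''): $x^u h(x^{s})$ permutes $\F_q^*$ iff $\gcd(u, s) = 1$ and $x^u h(x)^{s}$ permutes $\mu_d$. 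I would reprove this in two steps. For the forward direction: if $f$ permutes $\F_q^*$, then composing with the $s$-power map (which is $d$-to-$1$ from $\F_q^*$ onto $\mu_d$) shows $\alpha \mapsto f(\alpha)^{s} = \alpha^{us} h(\alpha^{s})^{s} = (\alpha^{s})^{u} h(\alpha^{s})^{s}$ must be $d$-to-$1$ onto $\mu_d$ through $\alpha^s$, i.e. $\hat f$ permutes $\mu_d$; and restricting $f$ to the subgroup $C = \ker(\alpha \mapsto \alpha^s)$ — on which $h(\alpha^s) = h(1)$ is a nonzero constant (nonzero because $f$ is injective on $C$ and $f$ is not identically zero there) — shows $\alpha \mapsto \alpha^u$ is injective on $C$, a cyclic group of order $s$, which forces $\gcd(u,s) = 1$. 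For the converse: assuming both conditions, suppose $f(\alpha) = f(\beta)$ with $\alpha, \beta \in \F_q^*$; applying the $s$-power map gives $\hat f(\alpha^s) = \hat f(\beta^s)$ in $\mu_d$ (using $h(\alpha^s) \ne 0$, which follows since $\hat f$ maps into $\mu_d \subseteq \F_q^*$ — here I should note $\hat f$ permuting $\mu_d$ forces $h(\zeta)\ne 0$ for all $\zeta\in\mu_d$), hence $\alpha^s = \beta^s$, so $\alpha$ and $\beta$ lie in the same coset of $C$, where $f$ is $\alpha \mapsto c\alpha^u$ with $c \ne 0$; since $\gcd(u,s)=1$ the map $\alpha \mapsto \alpha^u$ is injective on that coset, so $\alpha = \beta$.

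The main obstacle — really the only subtle point — is the careful bookkeeping around where $h$ vanishes and where $f$ vanishes, since a priori $h$ could have a root in $\mu_d$ and then $f$ would send a whole coset of $C$ to $0$, breaking the clean ``permutes $\F_q^*$'' picture. I would handle this by observing up front that either hypothesis (in particular condition~(2), that $\hat f$ permutes $\mu_d$ and thus lands in $\F_q^* \subseteq \mu_d$) forces $h(\zeta) \ne 0$ for every $\zeta \in \mu_d$, and symmetrically that if $f$ permutes $\F_q$ then $f$ can't kill a coset; once that is pinned down, the equivalence is just the coset-counting argument above. I would also remark that this lemma is essentially classical and include it only to keep the paper self-contained and to fix notation for the proof of Theorem~\ref{one}.
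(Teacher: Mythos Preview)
Your argument is correct and is the standard cyclotomic reduction; in fact the paper does not prove Lemma~\ref{lem} at all but simply cites \cite[Lemma~2.1]{Z1}, so there is nothing to compare against here beyond noting that your outline is essentially the proof given in that reference (and, as you say, in the Wan--Lidl line of work).

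Two small slips to clean up before writing this out. First, you introduce $C$ as ``the subgroup of $s$-th powers'' but then use $C=\ker(\alpha\mapsto\alpha^s)$; these are different subgroups (of orders $d$ and $s$, respectively), and it is the kernel description you actually want, since the fibers of $\alpha\mapsto\alpha^s$ are the cosets of $\mu_s$, not of the $s$-th powers. Second, in your last paragraph you wrote ``lands in $\F_q^* \subseteq \mu_d$'' where the inclusion should read $\mu_d\subseteq\F_q^*$. Neither affects the logic.
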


I discovered this lemma in 1997 when writing \cite{TZ}, and used it in seminars
and private correspondence, but I did not publish it
until recently \cite[Lemma~2.1]{Z1}.  For other applications
of this lemma, see \cite{MZ,Z1,Z2}.

\begin{proof}[Proof of Theorem~\ref{one}]
In light of the lemma, we just need to determine when $\hat f(x)$ permutes
$\mu_d$, where
\[
\hat f(x) := x^u (b x^k + g(x))^{(q-1)/d}.
\]
For $\zeta\in\mu_d\setminus\{1\}$ we have $g(\zeta)=0$, so
$\hat f(\zeta)=b^{(q-1)/d} \zeta^{u+k(q-1)/d}$.  Thus, $\hat f$ is injective
on $\mu_d\setminus\{1\}$ if and only if $b\ne 0$ and $\gcd(d,u+k(q-1)/d)=1$.
When these conditions hold, $\hat f(\mu_d\setminus\{1\})=\mu_d\setminus\{b^{(q-1)/d}\}$,
so $\hat f$ permutes $\mu_d$ if and only if $\hat f(1)=b^{(q-1)/d}$.
Since $\hat f(1)=(b+g(1))^{(q-1)/d}$, the latter condition is equivalent to
$(1+g(1)/b)^{(q-1)/d}=1$, as desired.
\end{proof}

The case $g=h_d$ of Theorem~\ref{one} is \cite[Thm.~2]{M},
and \cite[Prop.~4]{M} is the case that $g=h_5-x^3-x^4$ and $d=u=k-2=5$.

\begin{remark}
The key feature of the polynomials in
Theorem~\ref{one} as a particular case of Lemma~\ref{lem} is that the induced
mapping $\hat f$ on $\mu_d\setminus\{1\}$ is a monomial, and we know when monomials permute
$\mu_d$.  For certain values of $d$, we know other permutations of $\mu_d$:
for instance, if $q=q_0^2$ and $d=\sqrt{q_0}-1$ then $\mu_d=\F_{q_0}^*$, so
we can obtain permutation polynomials over $\F_q$ by applying Lemma~\ref{lem}
to polynomials $f(x)$ for which the induced map $\hat f$ on $\F_{q_0}^*$ is any
prescribed permutation polynomial.  This construction already yields interesting
permutation polynomials of $\F_q$ coming from degree-$3$ permutation polynomials
of $\F_{q_0}$; see \cite{TZ} for details and related results.
\end{remark}


\section{Permutation polynomials from additive cyclotomy}

Lemma~\ref{lem} addresses maps $\F_q\to\F_q$ which respect the partition of
$\F_q^*$ into cosets modulo a certain subgroup.  In this section we give an
analogous result in terms of cosets of the additive group of $\F_q$ modulo
a subgroup.
Let $p$ be the characteristic of $\F_q$.  An \emph{additive} polynomial over
$\F_q$ is a polynomial of the form $\sum_{i=0}^k a_i x^{p^i}$ with $a_i\in\F_q$.
The key property of additive polynomials $A(x)$ is that they induce
homomorphisms on the additive group of $\F_q$, since
$A(\alpha+\beta)=A(\alpha)+A(\beta)$ for $\alpha,\beta\in\F_q$.
The additive analogue of Lemma~\ref{lem} is as follows, where 
we write $\im B$ and $\ker B$ for
the image and kernel of the mapping $B\colon\F_q\to\F_q$.

\begin{proposition} \label{add}
Pick additive $A,B\in\F_q[x]$ and
an arbitrary $g\in\F_q[x]$.  Then $f(x):=A(x)+g(B(x))$ permutes\/ $\F_q$
if and only if $A(\ker B) + \hat f(\im B)=\F_q$, where
$\hat f(x):=g(x)+A(\hat B(x))$ and $\hat B\in\F_q[x]$ is any polynomial for which
$B(\hat B(x))$ is the identity on $\im B$.  In other words, $f$ permutes\/ $\F_q$
if and only if $\hat f$ induces a bijection $\im B \to \F_q/A(\ker B)$, where
$\F_q/A(\ker B)$ is the quotient of the additive group of\/ $\F_q$ by the subgroup
$A(\ker B)$.
\end{proposition}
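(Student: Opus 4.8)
The plan is to exploit, as in Lemma~\ref{lem}, that $f$ respects a natural partition of $\F_q$ --- here, the partition of the additive group of $\F_q$ into cosets of $\ker B$. The first step is to check that $f$ carries each such coset \emph{onto} a coset of the subgroup $A(\ker B)$. Indeed, for $c\in\F_q$ and $k\in\ker B$, additivity of $A$ together with $B(c+k)=B(c)$ gives $f(c+k)=A(c)+A(k)+g(B(c))=f(c)+A(k)$, so $f(c+\ker B)=f(c)+A(\ker B)$. Taking the union over $c\in\F_q$ yields $f(\F_q)=\bigcup_{c\in\F_q}\bigl(f(c)+A(\ker B)\bigr)$, a union of cosets of $A(\ker B)$; since $\F_q$ is finite, $f$ permutes $\F_q$ if and only if this union is all of $\F_q$.

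The second step rewrites this union in terms of $\hat f$ and $\im B$. For any $c\in\F_q$ the element $\hat B(B(c))$ is a $B$-preimage of $B(c)$, hence differs from $c$ by some $k_c\in\ker B$; therefore $\hat f(B(c))=g(B(c))+A(\hat B(B(c)))=g(B(c))+A(c)+A(k_c)=f(c)+A(k_c)$ lies in the coset $f(c)+A(\ker B)$. As $c$ ranges over $\F_q$ the values $B(c)$ range over all of $\im B$, so $f(\F_q)=\bigcup_{c\in\F_q}\bigl(\hat f(B(c))+A(\ker B)\bigr)=\hat f(\im B)+A(\ker B)$. Combined with the first step, this is the first assertion: $f$ permutes $\F_q$ if and only if $A(\ker B)+\hat f(\im B)=\F_q$.

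For the reformulation, note that the condition $A(\ker B)+\hat f(\im B)=\F_q$ says precisely that the induced map $\im B\to\F_q/A(\ker B)$, $\beta\mapsto\hat f(\beta)+A(\ker B)$, is surjective, so it suffices to show that surjectivity of this map forces bijectivity. This is a counting argument: since $B$ is an additive homomorphism, $q=|\ker B|\cdot|\im B|$, while $A(\ker B)$ is a homomorphic image of $\ker B$, so $|A(\ker B)|\le|\ker B|$; hence $|\F_q/A(\ker B)|=q/|A(\ker B)|\ge|\im B|$, and a surjection from the $|\im B|$-element set $\im B$ onto a set of size at least $|\im B|$ is a bijection. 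In particular surjectivity forces $|A(\ker B)|=|\ker B|$, i.e.\ $A$ injective on $\ker B$, which is the additive analogue of condition~(1) of Lemma~\ref{lem}.

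I do not anticipate a genuine obstacle here. The one point requiring a little care is the role of $\hat B$: it is only a polynomial section of $B$ over $\im B$ (and arbitrary on $\F_q\setminus\im B$), such a polynomial exists because every map $\F_q\to\F_q$ is a polynomial, and the computation in the second step holds for every valid choice of $\hat B$, so the conclusion is independent of that choice.
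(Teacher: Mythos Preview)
Your proof is correct and follows essentially the same approach as the paper: both compute $f(c+\ker B)=f(c)+A(\ker B)$ and then identify $f(\F_q)$ with $\hat f(\im B)+A(\ker B)$, the only cosmetic difference being that the paper writes $\F_q=\ker B+\hat B(\im B)$ and evaluates $f(\hat B(\gamma))$ directly, whereas you evaluate $\hat f(B(c))$ and observe it lies in $f(c)+A(\ker B)$. Your counting argument for the ``in other words'' reformulation is exactly what the paper records separately as the corollary immediately following the proposition.
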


\begin{proof}
For $\beta\in\ker B$ we have $f(x+\beta)=A(x)+A(\beta)+g(B(x))=f(x)+A(\beta)$.
Thus, for $\alpha\in\F_q$ we have $f(\alpha+\ker B)=f(\alpha)+A(\ker B)$.
Since $\F_q=\ker B + \hat B(\im B)$, it follows that
$f(\F_q)=f(\hat B(\im B))+A(\ker B)$.
Since $f(\hat B(\gamma))=A(\hat B(\gamma))+g(B(\hat B(\gamma)))=A(\hat B(\gamma))+g(\gamma)$
for $\gamma\in\im B$, the result follows.
\end{proof}

\begin{corollary}
If $f$ permutes\/ $\F_q$ then $A$ is injective on $\ker B$ and $\hat f$ is injective
on $\im B$.
\end{corollary}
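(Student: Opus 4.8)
The plan is to deduce both assertions directly from Proposition~\ref{add}, using the finiteness of $\F_q$. Assume $f$ permutes $\F_q$. By Proposition~\ref{add} we have $A(\ker B) + \hat f(\im B) = \F_q$, where the sum on the left is a sum of two subsets of the additive group of $\F_q$. The first step is the counting bound: for any subsets $S,T$ of a finite abelian group $G$ one has $|S+T| \le |S|\cdot|T|$, with equality only when the natural map $S\times T \to S+T$ is injective. Applying this with $S = A(\ker B)$ and $T = \hat f(\im B)$ gives $q = |\F_q| = |A(\ker B) + \hat f(\im B)| \le |A(\ker B)|\cdot|\hat f(\im B)|$.

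Next I would bound each factor. Since $A$ is additive, $A(\ker B)$ is a subgroup of the additive group of $\F_q$, and as the image of a homomorphism restricted to $\ker B$ we have $|A(\ker B)| \le |\ker B|$. Likewise $|\hat f(\im B)| \le |\im B|$ simply because $\hat f$ is a map out of the finite set $\im B$. Because $B$ is additive, $\ker B$ is the kernel and $\im B$ the image of the homomorphism induced by $B$ on the additive group of $\F_q$, so $|\ker B|\cdot|\im B| = |\F_q| = q$. Chaining the inequalities,
\[
q \le |A(\ker B)|\cdot|\hat f(\im B)| \le |\ker B|\cdot|\im B| = q,
\]
so equality holds throughout. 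In particular $|A(\ker B)| = |\ker B|$ forces $A$ to be injective on $\ker B$, and $|\hat f(\im B)| = |\im B|$ forces $\hat f$ to be injective on $\im B$, which is exactly the claim.

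The main thing to get right is the first inequality $|S+T|\le|S|\,|T|$ in a finite abelian group and its equality case; everything else is the elementary rank–nullity statement $|\ker B|\cdot|\im B| = q$ for the additive endomorphism $B$ together with the trivial observation that a map from a finite set has image no larger than the set. There is no real obstacle here — this corollary is essentially a packaging of the equality case of a pigeonhole count — so the proof is short.
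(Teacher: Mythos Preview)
Your proof is correct and follows essentially the same approach as the paper: from $A(\ker B)+\hat f(\im B)=\F_q$ you derive the chain $q\le |A(\ker B)|\cdot|\hat f(\im B)|\le|\ker B|\cdot|\im B|=q$ and read off injectivity from the forced equalities. The paper's argument is identical but stated more tersely.
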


\begin{proof}
If $A(\ker B)+\hat f(\im B)=\F_q$ then
\[
q\le\#A(\ker B)\cdot\#\hat f(\im B)\le \#(\ker B)\cdot\#(\im B)=q,
\]
where the last equality holds because $B$ defines a homomorphism on the
additive group of $\F_q$.  The result follows.
\end{proof}

\begin{corollary}
Suppose $A(B(\alpha))=B(A(\alpha))$ for all $\alpha\in\F_q$.  Then $f$ permutes\/ $\F_q$
if and only if $A$ permutes $\ker B$ and $A(x)+B(g(x))$ permutes $\im B$.
\end{corollary}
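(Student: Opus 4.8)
The plan is to feed the commuting hypothesis into Proposition~\ref{add}. First observe that $A(B(\alpha))=B(A(\alpha))$ for all $\alpha\in\F_q$ forces $A(\ker B)\subseteq\ker B$: if $B(\beta)=0$ then $B(A(\beta))=A(B(\beta))=A(0)=0$. Likewise $A(\im B)\subseteq\im B$, so $A(x)+B(g(x))$ really does map $\im B$ into itself and ``$A(x)+B(g(x))$ permutes $\im B$'' makes sense (though this will also drop out of the computation below). The engine of the proof is the identity $B(\hat f(\gamma))=A(\gamma)+B(g(\gamma))$ for every $\gamma\in\im B$, where $\hat f(x)=g(x)+A(\hat B(x))$ is the polynomial from Proposition~\ref{add}: expanding $B(\hat f(\gamma))=B(g(\gamma))+B(A(\hat B(\gamma)))$ by additivity of $B$, then rewriting $B(A(\hat B(\gamma)))=A(B(\hat B(\gamma)))=A(\gamma)$ via the commuting hypothesis together with $B\circ\hat B=\mathrm{id}$ on $\im B$. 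In other words, composing $\hat f|_{\im B}$ with $B$ recovers exactly the restriction to $\im B$ of the polynomial map $A(x)+B(g(x))$.

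Next I would dispose of the condition ``$A$ permutes $\ker B$''. By Proposition~\ref{add}, if $f$ permutes $\F_q$ then $\hat f$ induces a bijection $\im B\to\F_q/A(\ker B)$, so $\#\im B=q/\#A(\ker B)$; since $B$ is a homomorphism of the additive group of $\F_q$ we have $\#\ker B\cdot\#\im B=q$, hence $\#A(\ker B)=\#\ker B$, and combined with $A(\ker B)\subseteq\ker B$ this gives $A(\ker B)=\ker B$, i.e.\ $A$ permutes $\ker B$. In the ``if'' direction this condition is assumed outright, so in either direction I may henceforth suppose $A(\ker B)=\ker B$.

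Granting $A(\ker B)=\ker B$, the surjective homomorphism $B\colon\F_q\to\im B$ with kernel $\ker B$ induces an isomorphism $\bar B\colon\F_q/A(\ker B)\xrightarrow{\sim}\im B$. Post-composing the map $\gamma\mapsto\hat f(\gamma)+A(\ker B)$ of Proposition~\ref{add} with $\bar B$ turns it into the self-map of $\im B$ given by $\gamma\mapsto B(\hat f(\gamma))=A(\gamma)+B(g(\gamma))$, by the identity of the first paragraph; that is, into the restriction to $\im B$ of $A(x)+B(g(x))$. Since $\bar B$ is a bijection, $\hat f$ induces a bijection $\im B\to\F_q/A(\ker B)$ if and only if $A(x)+B(g(x))$ permutes $\im B$. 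Feeding this equivalence back into Proposition~\ref{add} and combining with the preceding paragraph gives the corollary.

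I expect no serious obstacle here: the only points needing care are that $\hat B$ need not be additive or fix $0$ (harmless, as only $B\circ\hat B=\mathrm{id}$ on $\im B$ is used) and the bookkeeping that legitimately replaces the quotient $\F_q/A(\ker B)$ by $\im B$ through $\bar B$. If one prefers to bypass Proposition~\ref{add} entirely, essentially the same computation shows $B\circ f=\bigl(A(x)+B(g(x))\bigr)\circ B$ on $\F_q$ and $f(\alpha+\ker B)=f(\alpha)+A(\ker B)$ for all $\alpha$, from which one checks directly that $f$ is injective precisely when $A$ is injective on $\ker B$ and $A(x)+B(g(x))$ is injective on $\im B$; but the route through Proposition~\ref{add} is shorter.
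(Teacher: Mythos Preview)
Your proof is correct and follows essentially the same approach as the paper: both reduce to Proposition~\ref{add}, use the commuting hypothesis to show $A(\ker B)\subseteq\ker B$ and to establish the key identity $B(\hat f(\gamma))=A(\gamma)+B(g(\gamma))$ for $\gamma\in\im B$, and then pass from the bijectivity criterion of the proposition to the condition on $A(x)+B(g(x))$. The only cosmetic differences are that you re-derive the cardinality argument (the paper cites the preceding corollary) and you phrase the final step via the induced isomorphism $\bar B\colon\F_q/\ker B\to\im B$, whereas the paper equivalently rewrites $\ker B+\hat f(\im B)=\F_q$ as $B(\hat f(\im B))=\im B$ by taking preimages.
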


\begin{proof}
Since $A$ and $B$ commute, and $A(0)=0$, it follows that $A(\ker B)\subseteq\ker B$.
Thus, by the previous corollary, if $f$ permutes $\F_q$ then $A$ permutes $\ker B$.
Henceforth assume that $A$ permutes $\ker B$.  By the proposition, $f$ permutes $\F_q$ if and only if
$\ker B + \hat f(\im B)=\F_q$; since the left side is the preimage under $B$ of
$B(\hat f(\im B))$, this condition may be restated as $B(\hat f(\im B))=\im B$.
For $\gamma\in\im B$ we have
$B(\hat f(\gamma))=B(g(\gamma))+B(A(\hat B(\gamma)))=B(g(\gamma))+A(B(\hat B(\gamma)))
 = B(g(\gamma))+A(\gamma)$, so $B(\hat f(x))$ permutes $\im B$ if and only if
$B(g(x))+A(x)$ permutes $\im B$.
\end{proof}

One way to get explicit examples satisfying the conditions of this result is as
follows: if $B=x^{q/p}+x^{q/p^2}+\dots+x^p+x$ and $A\in\F_p[x]$, then $A(B(x))=B(A(x))$,
so $f$ permutes $\F_q$ if and only if $A$ permutes $\ker B$ and $A(x)+B(g(x))$
permutes $\im B=\F_p$.  In case $g$ is a constant (in $\F_q$) times a polynomial
over $\F_p$, this becomes (a slight generalization of) \cite[Thm.~6]{M}.  The following
case of \cite[Cor.~8]{M} exhibits this.

\begin{example}
In case $q=p^2$ and $B=x^p+x$ and $A=x$, the previous corollary says $f(x):=x+g(x^p+x)$
permutes\/ $\F_{p^2}$ if and only if $x+g(x)^p+g(x)$ permutes\/ $\F_p$, which trivially holds
when $g=\gamma h(x)$ with $h\in\F_p[x]$ and $\gamma^{p-1}=-1$.  For instance,
taking $h(x)=x^2$, it follows that $x+\gamma (x^p+x)^2$ permutes\/ $\F_{p^2}$.  By using
other choices of $h$, we can make many permutation polynomials over\/ $\F_{p^2}$ whose degree
is a small multiple of $p$.  This is of interest because heuristics suggest
that `at random' there would be no permutation polynomials over\/ $\F_q$ of
degree less than $q/(2\log q)$.  The bulk of the known low-degree permutation
polynomials are \emph{exceptional}, in the sense that they permute
$\F_{q^k}$ for infinitely many~$k$; a great deal is known about these
exceptional polynomials, for instance see \cite{GZ}.  It is known that
any permutation polynomial of degree at most $q^{1/4}$ is exceptional.
However, the examples described above have degree on the order of
$q^{1/2}$ and are generally not exceptional.
\end{example}

Our final result generalizes the above example in a different direction
than Proposition~\ref{add}.

\begin{theorem} \label{last}
Pick any $g\in\F_q[x]$, any additive $A\in\F_p[x]$, and any $h\in\F_p[x]$.
For $B:=x^{q/p}+x^{q/p^2}+\dots+x^p+x$, the polynomial
$f(x):=g(B(x)) + h(B(x))A(x)$ permutes\/ $\F_q$ if and only if $A$ permutes
$\ker B$ and $B(g(x))+h(x)A(x)$ permutes\/ $\F_p$ and $h$ has no roots in $\F_p$.
\end{theorem}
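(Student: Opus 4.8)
The plan is to mimic the proof of Proposition~\ref{add}, using the fact that $B$ defines the trace homomorphism $\F_q\to\F_p$, so $\ker B$ is a hyperplane and $\im B=\F_p$. First I would exploit additivity: for $\beta\in\ker B$ and any $\alpha\in\F_q$, since $A$ is additive and $B(\alpha+\beta)=B(\alpha)$, we get $f(\alpha+\beta)=g(B(\alpha))+h(B(\alpha))(A(\alpha)+A(\beta))=f(\alpha)+h(B(\alpha))A(\beta)$. Thus the behavior of $f$ on the coset $\alpha+\ker B$ depends on the scalar $c:=h(B(\alpha))\in\F_p$: if $c\neq 0$ then $f$ is injective on that coset precisely when $A$ is injective on $\ker B$ (equivalently permutes $\ker B$), and the image is the coset $f(\alpha)+A(\ker B)$; but if $c=0$ then $f$ is constant on the whole coset $\alpha+\ker B$, which has size $q/p>1$, so $f$ cannot permute $\F_q$. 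This already forces the condition that $h$ has no roots in $\F_p$, and forces $A$ to permute $\ker B$ (I would also note that since $A\in\F_p[x]$ commutes with $B$ and $A(0)=0$, the earlier corollary gives $A(\ker B)\subseteq\ker B$, hence $A(\ker B)=\ker B$ under the permutation hypothesis).

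Next, assuming $h$ has no roots in $\F_p$ and $A$ permutes $\ker B$, I would reduce the permutation question to the induced map on the quotient $\F_q/\ker B\cong\F_p$. Because $f(\alpha+\ker B)=f(\alpha)+A(\ker B)=f(\alpha)+\ker B$, the map $f$ descends to a well-defined map $\bar f\colon\F_p\to\F_q/\ker B\cong\F_p$, and $f$ permutes $\F_q$ if and only if $\bar f$ is a bijection. To compute $\bar f$ concretely, pick a polynomial $\hat B\in\F_q[x]$ with $B(\hat B(\gamma))=\gamma$ for all $\gamma\in\F_p=\im B$ (as in Proposition~\ref{add}); for $\gamma\in\F_p$ the value $f(\hat B(\gamma))=g(\gamma)+h(\gamma)A(\hat B(\gamma))$ represents $\bar f(\gamma)$. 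Applying $B$ to pass to the quotient (using that $B$ induces an isomorphism $\F_q/\ker B\to\F_p$ and that $A\in\F_p[x]$ commutes with $B$): $B(f(\hat B(\gamma)))=B(g(\gamma))+h(\gamma)B(A(\hat B(\gamma)))=B(g(\gamma))+h(\gamma)A(B(\hat B(\gamma)))=B(g(\gamma))+h(\gamma)A(\gamma)$. Hence $\bar f$ is a bijection $\F_p\to\F_p$ if and only if $\gamma\mapsto B(g(\gamma))+h(\gamma)A(\gamma)$ permutes $\F_p$, which is exactly the stated condition.

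Putting the two directions together gives the theorem. The one slightly delicate point — and the step I expect to require the most care in the writeup — is handling the scalar multiplier $h(B(\alpha))$ correctly: one must observe that $f(\alpha+\beta)-f(\alpha)=h(B(\alpha))A(\beta)$ rather than $A(\beta)$, so the coset image is $f(\alpha)+h(B(\alpha))\cdot A(\ker B)$; but $A(\ker B)$ is an $\F_p$-subspace of $\F_q$ (since $A$ is additive and $\ker B$ is an $\F_p$-subspace), so multiplying by the nonzero scalar $h(B(\alpha))\in\F_p$ fixes it setwise, and the coset image is simply $f(\alpha)+A(\ker B)=f(\alpha)+\ker B$ independent of $\alpha$. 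This is what makes the descent to $\F_p$ clean; without the hypothesis $A\in\F_p[x]$ one could not in general multiply $A(\ker B)$ by an element of $\F_p$ and stay inside it, which is presumably why this construction "has no multiplicative analogue." Everything else is a direct transcription of the additive-cyclotomy argument, with $\im B=\F_p$ and $\hat B$ playing the same role as in Proposition~\ref{add}.
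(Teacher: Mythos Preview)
Your proof is correct and follows essentially the same route as the paper's: compute $f(\alpha+\beta)=f(\alpha)+h(B(\alpha))A(\beta)$ for $\beta\in\ker B$, deduce the necessity of the conditions on $h$ and $A$, then (under those hypotheses) show $f(\alpha+\ker B)=f(\alpha)+\ker B$ and reduce to the induced map on $\F_p$ by applying $B$. The paper computes $B(f(\alpha))$ directly for arbitrary $\alpha$ rather than passing through a section $\hat B$, but this is purely cosmetic.

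One small correction to your closing remark: any additive polynomial over $\F_q$ is automatically $\F_p$-linear, so $A(\ker B)$ is always an $\F_p$-subspace and hence stable under scaling by $\F_p^*$, regardless of whether the coefficients of $A$ lie in $\F_p$. The real reason the hypothesis $A\in\F_p[x]$ is needed is to make $A$ commute with $B$, which you use both to get $A(\ker B)\subseteq\ker B$ and in the step $B(A(\hat B(\gamma)))=A(B(\hat B(\gamma)))$.
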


\begin{proof}
For $\beta\in\ker B$ we have $f(x+\beta)=f(x)+h(B(x))A(\beta)$.
Thus, if $f$ permutes $\F_q$ then $A$ is injective on $\ker B$ and $h$ has no roots in $\F_p$.
Since $A(B(x))=B(A(x))$ and $A(0)=0$, also $A(\ker B)\subseteq\ker B$,
so if $f$ permutes $\F_q$ then $A$ permutes $\ker B$.
Henceforth assume $A$ permutes $\ker B$ and $h$ has no roots in $\F_p$.
Since $\im B=\F_p$ and $h(\F_p)\subseteq\F_p\setminus\{0\}$,
we have $h(B(\alpha))\in\F_p\setminus\{0\}$ for $\alpha\in\F_q$.  Thus,
for $\alpha\in\F_q$ we have $f(\alpha+\ker B) = f(\alpha)+\ker B$, so
$f$ permutes $\F_q$ if and only if $B(f(\F_q))=\im B$.  Now for $\alpha\in\F_q$ we have
$B(f(\alpha))=B(g(B(\alpha)))+B(h(B(\alpha))A(\alpha))$, and since $h(B(\alpha))\in\F_p$ this becomes
$B(f(\alpha))=B(g(B(\alpha)))+h(B(\alpha))B(A(\alpha))=B(g(B(\alpha))+h(B(\alpha))A(B(\alpha))$,
so $B(f(\F_q))$ is the image of $\im B$ under $B(g(x))+h(x)A(x)$.  The result follows.
\end{proof}

In case $g=\gamma h+\delta$ with $\gamma,\delta\in\F_q$, the above result
becomes a generalization of \cite[Thm.~10]{M}.
In view of the analogy between
Lemma~\ref{lem} and Proposition~\ref{add}, it is natural to seek a `multiplicative'
analogue of Theorem~\ref{last}.  However, I have been unable to find such a result:
the obstacle is that the polynomial $f$ in Theorem~\ref{last} is the sum of products
of polynomials, which apparently should correspond to a product of powers of polynomials,
but the latter is already included in Lemma~\ref{lem}.


\end{document}